\newcommand{\ROM}[1]{\mathrm{\uppercase\expandafter{\romannumeral#1}}}
\theoremstyle{definition}
\newtheorem{theorem}{Theorem}
\newtheorem{lemma}[theorem]{Lemma}
\newtheorem{proposition}[theorem]{Proposition}
\newtheorem{rem}[theorem]{Remark}
\newtheorem{problem}[theorem]{Problem}
\newtheorem{conjecture}[theorem]{Conjecture}
\title[Clifford algebra, isoparametric foliation and related geometric constructions]{\textbf{Clifford algebra, isoparametric foliation and related geometric constructions}}
\author[C. Qian]{Chao Qian}\address{School of Mathematics and Statistics, Beijing Institute of Technology, Beijing 100081, P.R. China}
\email{6120150035@bit.edu.cn}
\author[Z. Z. Tang]{Zizhou Tang}\address{Chern Institute of Mathematics, Nankai University, Tianjin 300071, P.R.
China}\email{zztang@nankai.edu.cn}
\thanks {The project is partially supported by the NSFC (No. 11871282, No. 11401560, No. 11571339) and Nankai Zhide foundation.}
\thanks{}
\thanks{}
\begin{document}

\maketitle

Dedicated to Professor Chiakuei Peng on the Occasion of His $75$th Birthday

\begin{abstract}
Based on representation theory of Clifford algebra, Ferus, Karcher and M\"{u}nzner constructed a series of isoparametric foliations. In this paper, we will survey recent studies on isoparametric hypersurfaces of OT-FKM type and investigate related geometric constructions with mean curvature flow.
\end{abstract}

\section{Introduction}
Let $N$ be a connected complete Riemannian manifold. A non-constant
smooth function $f$ on $N$ is called \emph{transnormal}, if there
exists a smooth function $b:\mathbb{R}\rightarrow\mathbb{R}$ such
that the gradient of $f$ satisfies $|\nabla f|^2=b(f)$.
Moreover, if there exists another function
$a:\mathbb{R}\rightarrow\mathbb{R}$ so that the Laplacian of $f$ satisfies $\triangle f=a(f)$, then $f$ is said to be
\emph{isoparametric}. Each regular level hypersurface of $f$ is then called an
\emph{isoparametric hypersurface}. It was proved by Wang (see \cite{Wa87})
that each singular level set is also a smooth
submanifold (not necessarily connected), the so-called \emph{focal submanifold}.
The whole family of isoparametric hypersurfaces together with the focal
submanifolds form a singular Riemannian foliation,
which is called the \emph{isoparametric foliation}. For recent study of isoparametric functions
on general Riemannian manifolds, especially on exotic spheres, see \cite{GT13}, \cite{GT14} and \cite{QT15}.

E. Cartan firstly gave a systematic study on isoparametric hypersurfaces in real
space forms and proved that an isoparametric hypersurface is exactly
a hypersurface with constant principal curvatures in these cases.
For the spherical case (the most interesting and complicated case), Cartan obtained
the classification result under the assumption that the number of the distinct principal curvatures is at most $3$.
Later, H. F. M\"{u}nzner \cite{Mu80} extended widely Cartan's work.
Precisely, given an isoparametric hypersurface $M^n$ in $S^{n+1}(1)$, let $\xi$ be
a unit normal vector field along $M^n$ in $S^{n+1}(1)$, $g$ the number
of distinct principal curvatures of $M$,
$$\cot \theta_{\alpha}~\;
(\alpha=1,\cdots,g\; ;\;~ 0<\theta_1<\cdots<\theta_{g} <\pi)$$
 the principal
curvatures with respect to $\xi$ and $m_{\alpha}$ the multiplicity
of $\cot \theta_{\alpha}$. M\"{u}nzner proved that $m_{\alpha}=m_{\alpha+2}$
(indices mod $g$),
$\theta_{\alpha}=\theta_1+\frac{\alpha-1}{g}\pi$ $(\alpha = 1,\cdots,
g)$, and there exists
a homogeneous polynomial $F: \mathbb{R}^{n+2}\rightarrow \mathbb{R}$ of degree $g$,
the so-called\emph{ Cartan-M\"{u}nzner polynomial}, satisfying
\begin{equation}\label{ab}
\left\{ \begin{array}{ll}
|\tilde{\nabla} F|^2= g^2r^{2g-2}, \nonumber\\
~\tilde{\triangle} F~~=\frac{m_2-m_1}{2}g^2r^{g-2},\nonumber
\end{array}\right.
\end{equation}
where $r=|x|$, $m_1$ and $m_2$ are the two multiplicities, and $\tilde{\nabla}, \tilde{\triangle}$ are
Euclidean gradient and Laplacian, respectively. Moreover,
M\"{u}nzner obtained the remarkable result that
$g$ must be $1, 2, 3, 4$ or $6$ (see a new simplified proof by Fang \cite{Fa17}). Since then,
the classification of isoparametric hypersurfaces with $g=4$ or $6$
in a unit sphere has been one of the most challenging problems in
differential geometry.

Recently, due to the classification theorem of Chi (see \cite{CCJ07}, \cite{Im08}, \cite{Ch11}, \cite{Ch13} and \cite{Ch16}), an
isoparametric hypersurface with $g=4$ in a unit sphere must be homogeneous
or of OT-FKM type (see below). For $g=6$,
R. Miyaoka \cite{Mi13}, \cite{Mi16} completed
the classification by showing that isoparametric hypersurfaces
in this case are always homogeneous.

Let us now recall the isoparametric hypersurfaces of
OT-FKM type (c.f. \cite{FKM81}). Given a symmetric Clifford system
$\{P_0,\cdots,P_m\}$ on $\mathbb{R}^{2l}$,
\emph{i.e.}, $P_0, ..., P_m$ are symmetric matrices
satisfying $P_{\alpha}P_{\beta}+P_{\beta}P_{\alpha}=2\delta_{\alpha\beta}I_{2l}$, Ferus, Karcher and
M\"{u}nzner defined a polynomial
$F:\mathbb{R}^{2l}\rightarrow \mathbb{R}$ by
$$ F(x) = |x|^4 - 2\displaystyle\sum_{\alpha = 0}^{m}{\langle
P_{\alpha}x,x\rangle^2}.$$
 They verified that $f=F|_{S^{2l-1}(1)}$ is
an isoparametric function on $S^{2l-1}(1)$ and each level
hypersurface of $f$ has $4$ distinct constant
principal curvatures with $(m_1, m_2)=(m, l-m-1)$, provided
$m>0$ and $l-m-1> 0$, where $l = k\delta(m)$ $(k=1,2,3,\cdots)$
and $\delta(m)$ is the dimension of an irreducible module of
the Clifford algebra $C_{m-1}$. As usual, for OT-FKM type, we
denote the two focal submanifolds by $M_+=f^{-1}(1)$ and
$M_-=f^{-1}(-1)$, which have codimensions $m_1+1$ and $m_2+1$
in $S^{2l-1}(1)$, respectively.

\section{Generalizations of OT-FKM construction}
In this section, we will discuss an interesting construction in \cite{QT16}. Inspired by the OT-FKM construction, for a symmetric Clifford system
$\{P_0,\cdots,P_m\}$ on $\mathbb{R}^{2l}$ with the Euclidean metric $\langle\cdot, \cdot\rangle$, we define for $0\leq i\leq m$
$$ M_i:=\{x\in S^{2l-1}(1)~ |~\langle P_0x, x\rangle=\langle P_1x, x\rangle =\cdots=\langle P_ix, x\rangle=0\},$$
and then we have a sequence
$$M_m=M_+\subset M_{m-1}\subset\cdots\subset M_0
\subset S^{2l-1}(1).$$
For $0 \leq i \leq m-1$, it is natural to define a function $f_i:M_i\rightarrow \mathbb{R}$~~by
~~$f_i(x)=\langle P_{i+1}x, x\rangle$ for $x\in M_i$ (see also \cite{TY12'}).

Similarly, by defining for $1\leq i \leq m$, $$N_i:=\{x\in S^{2l-1}(1)~|~\langle P_0x, x\rangle^2+
\langle P_1x, x\rangle^2+\cdots+\langle P_ix, x\rangle^2=1\},$$
we construct
another sequence (to understand the relation of inclusion, see \cite{QT16})
$$N_{1}\subset N_2\subset \cdots\subset N_m=M_-\subset S^{2l-1}(1).$$
And for $2 \leq i \leq m$, we define a function $g_i:N_i\rightarrow \mathbb{R}$
by $g_i(x)=\langle P_{i}x, x\rangle$ for $x\in N_i.$
\begin{theorem}\label{filtration}(\cite{QT16})
Assume the notations as above.
\item[(1).] For $0 \leq i \leq m-1$, the function $f_i:M_i\rightarrow \mathbb{R}$ with
$\mathrm{Im}(f_i)=[-1,~1]$ is an isoparametric function satisfying $$|\nabla f_i|^2=4(1-f_i^2),~
\triangle f_i~~=-4(l-i-1)f_i.$$
For any $c\in (-1,~1)$, the regular level set $\mathcal{U}_c=f_i^{-1}(c)$ has $3$
distinct principal curvatures $$-\sqrt{\frac{1-c}{1+c}}, \;0,\; \sqrt{\frac{1+c}{1-c}}$$ with
multiplicities $l-i-2$, $i+1$, and $l-i-2$ respectively,
w.r.t. the unit normal $\xi=\frac{\nabla f_i}{|\nabla f_i|}$.
For $c=\pm 1$, the two focal submanifolds $\mathcal{U}_{\pm 1}=f_i^{-1}(\pm 1)$ are both
isometric to $S^{l-1}(1)$ and are totally geodesic in $M_i$.

Particularly, we have a minimal isoparametric sequence
$$ M_m\subset M_{m-1}\subset\cdots\subset M_0\subset S^{2l-1}(1),$$
 i.e., each $M_{i+1}$ is a minimal isoparametric hypersurface in $M_{i}$
for $0 \leq i \leq m-1$. Moreover, $M_{i+j}$ is minimal in $M_i$.

\item[(2).] Similarly, for $2 \leq i \leq m$, the function $g_i:N_i\rightarrow \mathbb{R}$ with
$\mathrm{Im}(g_i)=[-1,~1]$ is an isoparametric function satisfying $$|\nabla g_i|^2=4(1-g_i^2),~
\triangle g_i~~=-4ig_i.$$
For any $c\in (-1,~1)$, the regular level set $\mathcal{V}_c=g_i^{-1}(c)$ has $3$
distinct principal curvatures $$-\sqrt{\frac{1-c}{1+c}},\; 0, \; \sqrt{\frac{1+c}{1-c}}$$ with
multiplicities $i-1$, $l-i$, and $i-1$ respectively,
w.r.t. the unit normal $\eta=\frac{\nabla g_i}{|\nabla g_i|}$.
For $c=\pm 1$, the two focal submanifolds $\mathcal{V}_{\pm 1}=g_i^{-1}(\pm 1)$ are both
isometric to $S^{l-1}(1)$ and are totally geodesic in $N_i$.

In particular, we get another minimal isoparametric sequence $$N_{1}\subset N_2\subset \cdots\subset
N_m\subset S^{2l-1}(1),$$  i.e., each $N_{i-1}$ is a minimal isoparametric hypersurface
in $N_i$ for $2 \leq i \leq m$. Moreover, $N_{i}$ is minimal in $N_{i+j}$.
\end{theorem}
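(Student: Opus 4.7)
The plan is to exploit the Clifford relations $P_\alpha P_\beta + P_\beta P_\alpha = 2\delta_{\alpha\beta} I$ to reduce everything to linear algebra at a point $x \in M_i$. Since each $P_\alpha$ is symmetric while $P_\alpha P_\beta$ is skew-symmetric for $\alpha \neq \beta$, the vectors $\{P_0 x, \ldots, P_i x\}$ are orthonormal, and by the defining constraints they are orthogonal to $x$ as well. Hence $M_i$ is a smooth submanifold of $S^{2l-1}$ of codimension $i+1$ with normal bundle $T_x^\perp M_i = \mathrm{span}\{P_0 x, \ldots, P_i x\}$. Extending $f_i$ to $h(x) := \langle P_{i+1}x, x\rangle$ on $S^{2l-1}$, projecting its ambient gradient $2 P_{i+1} x$ onto $T_x M_i$ and using $\langle P_{i+1}x, P_\alpha x\rangle = 0$ gives $\nabla^{M_i} f_i = 2(P_{i+1}x - f_i x)$ and hence $|\nabla f_i|^2 = 4(1-f_i^2)$.

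To obtain the Laplacian I would first establish that $M_i$ is minimal in $S^{2l-1}$. The mean-curvature component along the normal $P_\alpha x$ equals $-\sum_j \langle P_\alpha e_j, e_j\rangle$ over any orthonormal frame $\{e_j\}$ of $T_x M_i$; completing this to an orthonormal basis of $\mathbb{R}^{2l}$ via $\{x, P_0 x, \ldots, P_i x\}$, the full trace equals $\mathrm{tr}(P_\alpha) = 0$, while the extra contributions vanish because $\langle P_\alpha x, x\rangle = 0$ on $M_i$ and $P_\beta P_\alpha P_\beta = -P_\alpha$ for $\beta \neq \alpha$. Minimality then yields $\triangle_{M_i} f_i = \triangle_{S^{2l-1}} h - \sum_{\alpha=0}^i \mathrm{Hess}_{S^{2l-1}} h(P_\alpha x, P_\alpha x)$. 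Homogeneity of $h$ gives $\triangle_{S^{2l-1}} h = -4l h$, while $P_\alpha P_{i+1} P_\alpha = -P_{i+1}$ produces $\mathrm{Hess}_{S^{2l-1}} h(P_\alpha x, P_\alpha x) = -4h$, and combining these delivers $\triangle_{M_i} f_i = -4(l-i-1) f_i$.

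The hardest step is the explicit diagonalization of the shape operator $A_\xi$ on $\mathcal{U}_c = f_i^{-1}(c)$. I would handle it by splitting $\mathbb{R}^{2l} = V_+ \oplus V_-$ into the $\pm 1$ eigenspaces of $P_{i+1}$ and writing $x = y + z$ with $y \in V_+$, $z \in V_-$; the Clifford relations force $|y|^2 = (1+c)/2$, $|z|^2 = (1-c)/2$, and each $P_\alpha$ ($\alpha \leq i$) anticommutes with $P_{i+1}$ and so swaps $V_+$ and $V_-$. Differentiating $\xi = (P_{i+1}x - f_i x)/\sqrt{1-f_i^2}$ and projecting onto $T_x M_i$ one finds $\sqrt{1-c^2}\, A_\xi = cI - \pi \circ P_{i+1}$ on $T_x\mathcal{U}_c$, where $\pi$ denotes orthogonal projection to $T_x M_i$. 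Three $A_\xi$-invariant subspaces then appear: $W_\pm := V_\pm \cap T_x\mathcal{U}_c$, each of dimension $l-i-2$, on which $P_{i+1}$ acts as $\pm I$ so $A_\xi$ acts as $\mp\sqrt{(1\mp c)/(1\pm c)}$; and the $(i+1)$-dimensional space spanned by $X_\alpha := -(1-c)P_\alpha y + (1+c)P_\alpha z$ ($\alpha = 0, \ldots, i$), for which one checks $X_\alpha \in T_x\mathcal{U}_c$ and $\pi(P_{i+1}X_\alpha) = c X_\alpha$, giving eigenvalue $0$. A dimension count $2(l-i-2) + (i+1) = \dim \mathcal{U}_c$ closes the decomposition.

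Finally, the focal sets $\mathcal{U}_{\pm 1}$ are characterized by $P_{i+1}x = \pm x$, i.e., $x \in V_\pm$; the identity $\langle P_\alpha x, x\rangle = \langle P_\alpha P_{i+1}x, x\rangle = -\langle P_\alpha x, x\rangle$ then forces $\langle P_\alpha x, x\rangle = 0$ automatically on $V_\pm$, so $\mathcal{U}_{\pm 1} = V_\pm \cap S^{2l-1} \cong S^{l-1}$ is a great sphere of $S^{2l-1}$ and therefore totally geodesic in $M_i$. For the sequence, $M_{i+1} = f_i^{-1}(0)$ has principal curvatures $-1, 0, 1$ with symmetric multiplicities, hence vanishing mean curvature in $M_i$; the analogous trace-vanishing argument, applied inside $M_i$ with normal frame $\{P_{i+1}x, \ldots, P_{i+j}x\}$ and using $\langle P_\beta x, x\rangle = 0$ for $\beta \leq i+j$, shows that $M_{i+j}$ is minimal in $M_i$ as well. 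Part (2) for $(N_i, g_i)$ is proved by the same strategy, with the Clifford decomposition now based on the eigenspaces of $P_i$ and the multiplicity pattern adapted accordingly.
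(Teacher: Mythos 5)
The survey itself states Theorem \ref{filtration} without proof (it is quoted from \cite{QT16}), so your argument can only be judged on its own terms. For part (1) your argument is essentially complete and correct: the orthonormality of $\{x,P_0x,\dots,P_ix\}$, the formula $\nabla^{M_i}f_i=2(P_{i+1}x-f_ix)$, the trace argument for minimality of $M_i$ in $S^{2l-1}$, the identity $\sqrt{1-c^2}\,A_\xi=cI-\pi\circ P_{i+1}$, and the three invariant subspaces $W_\pm$ and $\mathrm{Span}\{X_\alpha\}$ all check out (I verified that $X_\alpha\in T_x\mathcal{U}_c$, that $P_{i+1}X_\alpha-cX_\alpha=(1-c^2)P_\alpha x$ is normal to $M_i$, and that the dimension count closes). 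One cosmetic point: $\mathrm{Hess}_{S^{2l-1}}h(P_\alpha x,P_\alpha x)=-4h$ receives only $-2h$ from the relation $P_\alpha P_{i+1}P_\alpha=-P_{i+1}$; the other $-2h$ comes from the second fundamental form of the sphere ($-\langle P_\alpha x,P_\alpha x\rangle\,\partial_r$-correction), which you should make explicit since the final coefficient $-4(l-i-1)$ depends on it.

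The genuine gap is part (2), which you dispatch with ``the same strategy.'' It is not the same strategy. $M_i$ is a transversal intersection of the quadrics $\langle P_\alpha x,x\rangle=0$, so its tangent and normal spaces come from the implicit function theorem and the normal frame $\{P_0x,\dots,P_ix\}$ is handed to you. By contrast, $N_i$ is the \emph{maximum locus} of $G(x)=\sum_{\alpha=0}^i\langle P_\alpha x,x\rangle^2$ on the sphere: on $N_i$ one has $\tilde\nabla G=4\sum_\alpha\langle P_\alpha x,x\rangle P_\alpha x=4x$, i.e.\ the spherical gradient of the defining function vanishes identically there. Treating $N_i$ as a regular level set would give codimension $1$, whereas its actual codimension in $S^{2l-1}$ is $l-i$ (consistent with $N_m=M_-$ having codimension $m_2+1=l-m$). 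To run your program for $g_i$ you must first establish the correct local model — e.g.\ $N_i\cong\{(P,u): P=\sum_{\alpha\le i}c_\alpha P_\alpha,\ \sum c_\alpha^2=1,\ u\in E_+(P),\ |u|=1\}$, a sphere bundle over $S^i$ — and from it compute $T_xN_i$, the normal space, the minimality of $N_i$, and the shape operator of $\mathcal{V}_c$; the multiplicities $(i-1,\,l-i,\,i-1)$ are not obtained from part (1) by any mechanical substitution. As written, this half of the theorem is unproved.
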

\begin{rem}Using representations of Clifford algebras, M. Radeschi in \cite{Ra14} generalized isoparametric foliations of OT-FKM type and constructed indecomposable singular Riemannian foliations of higher codimension on round spheres, most of which are non-homogeneous.
\end{rem}
Next we turn to eigenvalues of Laplacian. Given an $n$-dimensional closed Riemannian manifold $M^n$, recall that the
Laplace-Beltrami operator acting on smooth functions on $M$ is an elliptic operator and has a
discrete spectrum
$$\{0=\lambda_0(M)<\lambda_1(M)\leq \lambda_2(M)\leq\cdots\leq \lambda_k(M)\leq\cdots, k
\uparrow\infty\},$$
with each eigenvalue counted with its multiplicity.
Following the way in \cite{TY13} and \cite{TXY14}, the construction of isoparametric functions in Theorem \ref{filtration} implies the following result on eigenvalue estimates.
\begin{theorem}\label{eigenvalue}(\cite{QT16})
Let $\{P_0,\cdots,P_m\}$ be a symmetric Clifford system on $\mathbb{R}^{2l}$.
\item[(1).] For the sequence $M_m\subset M_{m-1}\subset\cdots\subset M_0\subset
S^{2l-1}(1)$, the following inequalities hold

a). $\lambda_k(M_{i})\leq \frac{l-i-2}{l-i-3}\lambda_k(M_{i+1})$
provided that $0 \leq i\leq m-1$ and $l-i-3>0$;

b). $\lambda_k(M_{i+1})\leq 2\lambda_k(S^{l-1}(1))$ provided that $0 \leq i\leq m-1$.

\item[(2).] For the sequence $N_{1}\subset N_2\subset\cdots\subset
N_m\subset S^{2l-1}(1)$, the following inequalities hold

a). $\lambda_k(N_{i})\leq \frac{i-1}{i-2}\lambda_k(N_{i-1})$ provided that $3\leq i\leq m$;

b). $\lambda_k(N_{i-1})\leq 2\lambda_k(S^{l-1}(1))$ provided that $2\leq i\leq m$.
\end{theorem}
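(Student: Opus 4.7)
\emph{Proof proposal.} The plan is to apply the Courant--Fischer min--max principle: for each inequality it suffices to exhibit, in $H^1$ of the larger manifold, a $(k+1)$-dimensional test subspace on which the Rayleigh quotient is bounded by the stated right-hand side. In every case the test subspace will be built by transplanting the first $k+1$ eigenfunctions of the smaller manifold along the isoparametric structure of Theorem \ref{filtration}. Since the isoparametric functions $f_i$ on $M_i$ and $g_i$ on $N_i$ satisfy identically normalised equations $|\nabla\cdot|^2=4(1-(\cdot)^2)$ and share the same three principal curvatures $\pm\sqrt{(1\mp c)/(1\pm c)},0$, with the non-zero multiplicity $l-i-2$ for $f_i$ playing exactly the role of $i-1$ for $g_i$, it is enough to carry out (1) and the proof of (2) then follows verbatim.

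For (1)b I would exploit that the focal submanifolds $\mathcal{U}_{\pm 1}$ of $f_i$ on $M_i$ are totally geodesic copies of $S^{l-1}(1)$. Writing $f_i=\cos 2t$, the regular level $M_{i+1}=f_i^{-1}(0)$ sits at distance $\pi/4$ from $\mathcal{U}_{+1}$, and the gradient-flow projection defines a smooth submersion $\sigma:M_{i+1}\to\mathcal{U}_{+1}$ whose fibres are mutually isometric. Using the isoparametric data to decouple the Jacobi operator along normal geodesics into an $(l-i-2)$-dimensional ``spherical'' piece (Jacobi equation $J''+J=0$) and an $(i+1)$-dimensional ``flat'' piece (Jacobi equation $J''=0$), the Jacobi fields with $J(0)\in T\mathcal{U}_{+1}$ and $J'(0)=0$ have lengths $\cos(\pi/4)=1/\sqrt{2}$ and $1$ respectively at $t=\pi/4$. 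Consequently $d\sigma$ stretches horizontal tangent lengths by a factor $\sqrt{2}$ in the first piece and by $1$ in the second, giving the pointwise bound $|\nabla(\phi\circ\sigma)|_{M_{i+1}}^2\le 2\,|\nabla\phi|_{S^{l-1}(1)}^2\circ\sigma$ for any test function $\phi$. Coupled with Fubini for the isometric fibres, the Rayleigh quotient of any element of $\mathrm{span}\{\phi_0\circ\sigma,\ldots,\phi_k\circ\sigma\}$ (where $\phi_0,\ldots,\phi_k$ are the first $k+1$ eigenfunctions on $S^{l-1}(1)$) is bounded by $2\lambda_k(S^{l-1}(1))$, and Courant--Fischer gives (1)b.

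For (1)a I would transplant in the opposite direction: extend the first $k+1$ eigenfunctions $u_0,\ldots,u_k$ of $M_{i+1}$ to $M_i$ by declaring them constant along the orbits of $\nabla f_i/|\nabla f_i|^2$. By the co-area formula both $\int_{M_i}\tilde u_j^2$ and $\int_{M_i}|\nabla\tilde u_j|^2$ reduce to one-dimensional integrals over $c\in(-1,1)$ of integrals over $M_{i+1}$ weighted by products of the Jacobi factors associated to the three principal directions at parameter $t=\tfrac{1}{2}\arccos c$, with respective multiplicities $l-i-2,l-i-2,i+1$. Following the scheme of \cite{TY13,TXY14}, the ratio of these weighted integrals simplifies to a ratio of Euler Beta integrals whose explicit value is $(l-i-2)/(l-i-3)$; the hypothesis $l-i-3>0$ is exactly what makes those Beta integrals finite. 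This gives the uniform bound $R_{M_i}(\tilde u_j)\le \tfrac{l-i-2}{l-i-3}\,R_{M_{i+1}}(u_j)$, and Courant--Fischer yields (1)a.

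The main obstacle is the Jacobi-field bookkeeping in (1)a, together with the admissibility of the transplanted function $\tilde u_j$ across the two focal submanifolds $\mathcal{U}_{\pm 1}$, where the gradient flow of $f_i$ collapses: as in \cite{TY13}, the point is that $\tilde u_j$ is bounded with $L^2$ distributional gradient on $M_i$, which is enough for its use in the min--max. Part (2) is identical after replacing $l-i-2$ by $i-1$ and $i+1$ by $l-i$; in particular (2)a produces the factor $(i-1)/(i-2)$, the same ratio $m/(m-1)$ with $m$ the non-zero principal curvature multiplicity, while (2)b again gives the structural factor $2$ from the distance $\pi/4$ to the focal $S^{l-1}(1)$.
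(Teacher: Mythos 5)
Your proposal is correct and follows essentially the route the paper points to: the survey gives no proof of Theorem \ref{eigenvalue}, referring instead to \cite{QT16} and the test-function method of \cite{TY13}, \cite{TXY14}, and your argument is a faithful reconstruction of that method. In particular, part b) via the focal maps of Theorem \ref{eigenmap} (horizontal stretching factor $\sqrt{2}$, equal-volume fibres) and part a) via normal-geodesic extension with the weight $(\sin 2t)^{l-i-2}$, whose Beta-integral ratio is exactly $\frac{l-i-2}{l-i-3}$ under the convergence condition $l-i-3>0$, is the intended computation.
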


As an unexpected phenomenon, the relations between the focal maps
of isoparametric foliations constructed in Theorem \ref{filtration} and harmonic maps were found. To be more precise,
let $M$ and $N$ be closed Riemannian manifolds,
and $f$ a smooth map from $M$ to $N$. The energy functional $E(f)$ is defined by
$E(f)=\frac{1}{2}\int_M|df|^2dV_M.$
The map $f$ is called \emph{harmonic} if it is
a critical point of the energy functional $E$. We refer to \cite{EL78} and \cite{EL88}
for the background and development of this topic. For $N=S^n(1)$, a map $\varphi:M\rightarrow
S^n(1)$ is called an \emph{eigenmap} (\cite{Ta01}) if the $\mathbb{R}^{n+1}$-components are
eigenfunctions of the Laplacian of $M$ and all have the same eigenvalue. In particular,
$\varphi$ is a harmonic map. In 1980, Eells and Lemaire (See p. 70 of \cite{EL83}) posed
the following

\begin{problem}
Characterize those compact manifolds $M$ for which there is an eigenmap $\varphi:M\rightarrow
S^n(1)$ with $\dim(M)\geq n$ ?
\end{problem}
In 1993, Eells and Ratto (See p. 132 of \cite{ER93}) emphasized again that it is
quite natural to study the eigenmaps to $S^n(1)$.
Another application of the construction in Theorem \ref{filtration} is the following

\begin{theorem}\label{eigenmap}(\cite{QT16})
Let $\{P_0,\cdots,P_m\}$ be a symmetric Clifford system
on $\mathbb{R}^{2l}$.

(1). For $0\leq i \leq m-1$, both of the focal maps $\phi_{\pm \frac{\pi}{4}}: M_{i+1} \rightarrow
\mathcal{U}_{\pm 1}\cong S^{l-1}(1)$ defined by
$$\phi_{\pm \frac{\pi}{4}}(x)=\frac{1}{\sqrt{2}}(x\pm P_{i+1}x),~x \in M_{i+1},$$
are submersive eigenmaps with the same eigenvalue $2l-i-3$.

(2). For $2\leq i \leq m$, both of the focal maps $\psi_{\pm \frac{\pi}{4}}:N_{i-1}\rightarrow
\mathcal{V}_{\pm 1}\cong S^{l-1}(1)$ defined by
$$\psi_{\pm \frac{\pi}{4}}(x)=\frac{1}{\sqrt{2}}(x\pm P_{i}x),~x \in N_{i},$$
are submersive eigenmaps with the same eigenvalue $l+i-2$.
\end{theorem}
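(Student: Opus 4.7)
The plan combines three ingredients: the algebraic Clifford relations; the minimality of $M_{i+1}$ (resp.\ $N_{i-1}$) as a submanifold of $S^{2l-1}(1)$; and the Takahashi-type principle that the Euclidean coordinates of a minimal submanifold of the unit sphere are eigenfunctions of its Laplacian with eigenvalue equal to the dimension.

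First, to check that the maps land in the claimed focal sets, I would use $P_{i+1}^2=I$, the anticommutation $P_\alpha P_\beta+P_\beta P_\alpha=0$ for $\alpha\neq\beta$, and the defining equations $\langle P_\alpha x,x\rangle=0$ on $M_{i+1}$. A short computation then gives $|\phi_{\pm\pi/4}(x)|^2=1$ and $\langle P_{i+1}\phi_\pm(x),\phi_\pm(x)\rangle=\pm 1$, so $\phi_\pm(x)\in\mathcal U_{\pm 1}\cong S^{l-1}(1)$. The verification for $\psi_\pm$ is parallel once one observes that on $N_{i-1}$ the maximality $\sum_{\alpha=0}^{i-1}\langle P_\alpha x,x\rangle^2=1$, together with the OT-FKM bound $\sum_{\alpha=0}^{m}\langle P_\alpha x,x\rangle^2\leq 1$, forces $\langle P_\alpha x,x\rangle=0$ for every $\alpha\geq i$.

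The next step is to prove that $M_{i+1}$ and $N_{i-1}$ are minimal in $S^{2l-1}(1)$. For $M_{i+1}$, the Clifford relations with the defining equations give that $\{x,P_0x,\ldots,P_{i+1}x\}$ is an orthonormal normal frame in $\mathbb{R}^{2l}$; computing shape operators, $\mathrm{tr}\,A_x=-\dim M_{i+1}$ and $\mathrm{tr}\,A_{P_\alpha x}=0$, the latter because $\mathrm{tr}\,P_\alpha=0$ and all the ``normal'' contributions $\langle P_\alpha P_\beta x,P_\beta x\rangle$ reduce via $P_\beta P_\alpha P_\beta=\pm P_\alpha$ to $\pm\langle P_\alpha x,x\rangle=0$. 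Hence $\vec H_{M_{i+1},\mathbb{R}^{2l}}=-x$, which is precisely the sphere's mean curvature direction, so $M_{i+1}$ is minimal in $S^{2l-1}(1)$. For $N_{i-1}$, I would recognize it as the max-value focal submanifold of the isoparametric function $G(x)=\sum_{\alpha=0}^{i-1}\langle P_\alpha x,x\rangle^2$ on $S^{2l-1}(1)$; short computations with the Clifford relations yield $|\nabla G|^2=16G(1-G)$ and $\Delta G=8i-8(l+1)G$, and then Wang's theorem supplies the minimality.

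With minimality in hand, the Beltrami identity $\Delta_M y=(\dim M)\vec H_{M,\mathbb{R}^{2l}}=-(\dim M)y$ makes every linear function of $x$ restrict to an eigenfunction of $\Delta_M$ with eigenvalue $\dim M$. Applied to the components of $\phi_\pm$ and $\psi_\pm$, this produces the common eigenvalues $\dim M_{i+1}=2l-i-3$ and $\dim N_{i-1}=l+i-2$. Finally, for submersivity, $d\phi_\pm(x)\cdot v=\tfrac{1}{\sqrt{2}}(I\pm P_{i+1})v$ has kernel $T_xM_{i+1}\cap E_{\mp 1}(P_{i+1})$; decomposing $x=x_++x_-$ into the $\pm1$-eigenspaces of $P_{i+1}$, the constraint $\langle P_{i+1}x,x\rangle=0$ forces $|x_\pm|^2=\tfrac{1}{2}$, and $\langle P_\alpha x,x\rangle=0$ for $\alpha\leq i$ translates to $\langle P_\alpha x_+,x_-\rangle=0$. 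A Gram-matrix computation using the anticommutations shows that $\{x_-,P_0 x_+,\ldots,P_i x_+\}$ is linearly independent in $E_{-1}(P_{i+1})$, giving $\dim(T_xM_{i+1}\cap E_{-1}(P_{i+1}))=l-(i+2)$ and hence $\mathrm{rank}\,d\phi_+=(2l-i-3)-(l-i-2)=l-1=\dim\mathcal U_{+1}$. The arguments for $\phi_-$ and for $\psi_\pm$ are analogous. I expect this rank count to be the main obstacle, as it is the only step that demands more than algebraic manipulation in the Clifford algebra combined with the Takahashi principle.
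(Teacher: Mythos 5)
Your proposal is sound, and it follows what is essentially the intended route: this survey gives no proof of Theorem \ref{eigenmap} (it defers to \cite{QT16}), and the argument there likewise rests on the minimality of $M_{i+1}$ and $N_{i-1}$ in $S^{2l-1}(1)$, the Takahashi/Beltrami identity $\Delta x=-(\dim M)\,x$ for the coordinate functions, the linearity of $\phi_{\pm}$ and $\psi_{\pm}$, and a rank count for submersivity. Your computations check out: the normal frame $\{x,P_0x,\dots,P_{i+1}x\}$ is orthonormal on $M_{i+1}$ and $\mathrm{tr}\,A_{P_\alpha x}=0$ follows from $\mathrm{tr}\,P_\alpha=0$ and $P_\beta P_\alpha P_\beta=\pm P_\alpha$; the identities $|\nabla G|^2=16G(1-G)$ and $\Delta G=8i-8(l+1)G$ are correct; and the orthogonality of $\{x_-,P_0x_+,\dots,P_ix_+\}$ in $E_{-1}(P_{i+1})$ does give $\dim\ker d\phi_+=l-i-2$ and hence rank $l-1$. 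Two small repairs. First, the minimality of a focal variety of an isoparametric function is not Wang's theorem (Wang \cite{Wa87} proves smoothness); it is due to Ge--Tang \cite{GT14}. You can avoid the appeal altogether by observing that Bessel's inequality identifies $N_{i-1}$ with the focal submanifold $M_-$ of the sub-Clifford system $\{P_0,\dots,P_{i-1}\}$, whose minimality is already in \cite{FKM81}. Second, the word ``analogous'' for the submersivity of $\psi_{\pm}$ glosses over the fact that $N_{i-1}$ is a \emph{singular} (maximum) level of $G$, so its tangent space is not read off from gradients of defining equations as for $M_{i+1}$; one needs the explicit description of $T_xM_-$ from the FKM theory (or from the identification above) before running the Gram-matrix count. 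Neither point is a genuine obstruction, but both need to be said for the proof to be complete.
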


We conclude this section with talking about progress of two conjectures on minimal submanifolds. Let $W^n$ be a closed Riemannian manifold minimally immersed in
$S^{n+p}(1)$. Let $B$ be the second fundamental form
and define an extrinsic quantity
$$\sigma(W)=\max\{~|B(X,X)|^2~|~X\in TM,~|X|=1\}.$$

In 1986, H. Gauchman \cite{Ga86} established a well known rigidity theorem which
states that if $\sigma (W)<1/3$, then the submanifold $W$ must be totally geodesic.
When the dimension $n$ of $W$ is even, the rigidity theorem above is optimal.
As presented in \cite{Ga86}, there exist minimal submanifolds
in unit spheres which are not totally geodesic, with $|B(X,X)|^2\equiv 1/3$ for any
unit tangent vector $X$.  When the dimension $n$ of $W$ is odd and $p>1$, the conclusion still holds
under a weaker assumption $\sigma(W)\leq \frac{1}{3-2/n}$.

In 1991, P. F. Leung \cite{Le91} proved that if $n$ is odd, a closed minimally
immersed submanifold $W^n$ with $\sigma(W)\leq \frac{n}{n-1}$ is totally geodesic
provided that the normal connection is flat. Based on this fact, he proposed the
following
\begin{conjecture}\label{weak}  If $n$ is odd, $W^n$ is minimally immersed in $S^{n+p}(1)$
with $\sigma(W) \leq\frac{n}{n-1}$, then $W$ is homeomorphic to $S^n$.
\end{conjecture}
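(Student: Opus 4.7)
The plan is to leverage the bound $\sigma(W)\leq n/(n-1)$, combined with minimality, to force enough positivity in the intrinsic curvature of $W$ to apply a topological sphere theorem; Leung's flat-normal-bundle case shows that the bound already contains substantial rigidity, and the goal here is to extract only topological, rather than geometric, rigidity and thereby dispense with the flat-normal assumption.

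The first step is curvature estimation from the Gauss equation: for orthonormal $X,Y\in T_xW$,
$$K(X\wedge Y)=1+\langle B(X,X),B(Y,Y)\rangle-|B(X,Y)|^2,$$
and minimality gives $\operatorname{Ric}(X,X)=(n-1)-|B_X|^2$, where $|B_X|^2=\sum_i|B(X,e_i)|^2$. A polarization argument yields $|B(X,Y)|^2\leq\sigma(W)$ for orthonormal $X\perp Y$, but the naive bound $|B_X|^2\leq n\,\sigma(W)$ is too weak. The key estimate to establish is a sharper inequality, exploiting the symmetry of $B$ together with the trace-zero (minimality) constraint, of the form $|B_X|^2<n-1$; this would force strictly positive Ricci curvature on $W$ from the hypothesis $\sigma(W)\leq n/(n-1)$.

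Given positive Ricci curvature, Bonnet--Myers yields finite $\pi_1(W)$ and a diameter bound, so the universal cover $\widetilde{W}$ is a compact manifold satisfying the same extrinsic hypotheses as $W$. A topological sphere theorem---either the classical Berger--Klingenberg pinching theorem, if one can promote the Ricci bound to a sectional pinching via a finer use of the Gauss equation, or the Grove--Shiohama diameter sphere theorem coupled with the Myers diameter estimate---would then give that $\widetilde{W}$ is homeomorphic to $S^n$. The main obstacle is the descent from $\widetilde{W}$ to $W$: for odd $n$, Synge's argument is unavailable and quotients of $S^n$ by finite free actions (e.g., lens spaces) are a priori possible. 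A promising alternative route bypasses this descent by working directly on $W$ via Morse theory on the height functions $h_v=\langle\cdot,v\rangle$, whose Hessians at critical points are governed by $B$ and the normal component of $v$; showing that $\sigma(W)\leq n/(n-1)$ forces every critical point of a generic $h_v$ to have index $0$ or $n$ would yield, through Reeb's theorem, that $W$ itself is homeomorphic to $S^n$. Extracting this pointwise index restriction from the scalar bound on $\sigma(W)$ is where the principal analytic difficulty lies, and is the step that Leung's flat-normal hypothesis effectively finesses.
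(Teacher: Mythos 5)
This statement is not a theorem of the paper but a conjecture of Leung that the paper quotes in order to \emph{refute} it. Theorem \ref{counter example} (together with the remark following it) exhibits counterexamples: for an isoparametric hypersurface with $g=4$ and multiplicities $(m_1,m_2)$, the focal submanifold $M_+$ (of dimension $n=m_1+2m_2$) is minimal in the sphere with $\sigma(M_+)=1\leq \frac{n}{n-1}$, yet it is not homeomorphic to $S^n$; when $m_1$ is odd this is an odd-dimensional counterexample to Conjecture \ref{weak} (and likewise $M_-$ when $m_2$ is odd). So no proof of the statement can be correct, and your attempt necessarily breaks down somewhere.

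The concrete failure point is your ``key estimate'' $|B_X|^2<n-1$. The hypothesis $\sigma(W)\leq \frac{n}{n-1}$ only controls the diagonal values $|B(X,X)|$; it does not control $\sum_i|B(X,e_i)|^2=\sum_\alpha|A_\alpha X|^2$ unless the shape operators can be simultaneously diagonalized, which is exactly what the flat normal bundle hypothesis in Leung's partial result provides and what you are trying to discard. For the OT-FKM focal submanifolds every shape operator has eigenvalues $\pm 1$ and $0$, so $\sigma=1$, while the off-diagonal contributions from the $m_1+1$ independent normal directions are what prevent any Ricci or sectional pinching strong enough for a sphere theorem. Even where Ricci happens to be positive on these examples, that yields only finiteness of $\pi_1$ and a diameter bound, not a homeomorphism of the universal cover with $S^n$; neither Berger--Klingenberg (which needs quarter-pinched sectional curvature) nor Grove--Shiohama (which needs $\mathrm{sec}\geq 1$ and $\mathrm{diam}>\pi/2$) is applicable. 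Your alternative Morse-theoretic route fails for the same structural reason: on $M_\pm$ the Hessian of a generic height function has both positive and negative eigenvalues at critical points (the shape operators are indefinite), so critical points of intermediate index do occur, consistent with the fact that these manifolds have nontrivial intermediate homology. The correct ``resolution'' of the statement, as the paper records, is that it is false.
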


By investigating the second fundamental form of the Clifford minimal hypersurfaces
in unit spheres, Leung also posed the following stronger

\begin{conjecture}\label{strong}  If $n$ is odd and $W^n$ is minimally immersed in $S^{n+p}(1)$
with $\sigma(W) <\frac{n+1}{n-1}$, then $W$ is homeomorphic to $S^n$.
\end{conjecture}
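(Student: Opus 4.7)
The plan is to convert the pointwise bound on the second fundamental form into a bound on the intrinsic curvature of $W$ and then invoke a topological sphere theorem. First, for orthonormal $X, Y \in T_x W$ the Gauss equation reads $K(X,Y) = 1 + \langle B(X,X), B(Y,Y)\rangle - |B(X,Y)|^2$, and polarising the hypothesis $|B(X,X)|^2 \le \sigma(W)$ via $4B(X,Y) = B(X+Y, X+Y) - B(X-Y, X-Y)$ yields $|B(X,Y)|^2 \le \sigma(W)$; together with Cauchy--Schwarz this produces the naive estimate $K(X,Y) \ge 1 - 2\sigma(W)$.

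This pointwise bound is too weak under the pinching $\sigma(W) < \frac{n+1}{n-1}$, so the next step is to exploit minimality to improve the Ricci estimate. Since each shape operator $A_\nu$ is symmetric and trace-free, its eigenvalues must straddle zero and satisfy standard spread inequalities. Summed against an adapted frame, the identity $\mathrm{Ric}(X) = (n-1) - |B(X,X)|^2 - \sum_j |B(X, e_j)|^2$ for a minimal submanifold, combined with Gauchman-type sharp algebraic inequalities for trace-free symmetric operators, is expected to push $\mathrm{Ric}(X)$ to a strictly positive lower bound when $n$ is odd. By Myers' theorem this alone would already force $\pi_1(W)$ finite and $W$ to have bounded diameter.

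The final step is to upgrade positive Ricci curvature to a topological sphere. Two routes are plausible. The first combines the refined curvature estimate with a Simons-type integral identity on $|B|^2$, in the spirit of the classical Chern--do Carmo--Kobayashi approach, aiming to rule out the borderline Clifford-type examples through an odd-parity argument; the odd dimension would be used to exclude the extremal configurations that saturate the bound. The second route invokes convergence results for Ricci flow in the spirit of Brendle--Schoen: under a suitable positive isotropic curvature condition following from the refined pinching, the flow yields a diffeomorphism to a spherical space form, and the odd-dimensionality then narrows this to $S^n$ itself.

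The principal obstacle is transparent: the threshold $\sigma(W) = \frac{n+1}{n-1}$ is saturated precisely by Clifford minimal hypersurfaces, which have even dimension. Any honest proof must therefore use the odd-parity hypothesis in an essential, global way, since a purely pointwise argument cannot distinguish odd from even dimensions. Accordingly the realistic short-term target is to establish the conjecture under auxiliary hypotheses such as flat normal bundle (as in Leung's proof of the weak case), low codimension $p$, or $n$ sufficiently large, while the full statement appears to require a synthesis of integral-geometric and geometric-flow techniques that is not yet available.
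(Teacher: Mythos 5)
There is a fundamental problem with the premise of your proposal: the statement you are trying to prove is presented in the paper as Leung's \emph{conjecture}, not as a theorem, and the paper's actual contribution here is to \emph{disprove} it. Theorem \ref{counter example} shows that the focal submanifolds $M_{\pm}$ of an isoparametric hypersurface in $S^{n+1}(1)$ with $g=4$ and multiplicities $(m_1,m_2)$ are minimally embedded with $\sigma(M_{\pm})=1<\frac{n+1}{n-1}$, yet they are not homeomorphic to spheres (for instance, $M_+$ of OT-FKM type is a sphere bundle over a sphere with nontrivial topology). When $m_1$ is odd, $\dim M_+=m_1+2m_2$ is odd, so $M_+$ satisfies every hypothesis of Conjecture \ref{strong} and violates its conclusion; similarly for $M_-$ when $m_2$ is odd. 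So no proof along your lines, or any other, can exist for the statement as written.

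It is worth locating exactly where your strategy breaks down, because the failure point is instructive. For a minimal submanifold the Gauss equation gives $\mathrm{Ric}(X)=(n-1)-\sum_j |B(X,e_j)|^2$, and the hypothesis $\sigma(W)<\frac{n+1}{n-1}$ only controls the ``diagonal'' quantities $|B(X,X)|^2$, not the full row sums $\sum_j |B(X,e_j)|^2$. These two are comparable precisely when the shape operators can be simultaneously diagonalized, i.e.\ when the normal connection is flat --- which is exactly the auxiliary hypothesis under which Leung and Hasanis--Vlachos obtained their positive results, and which you correctly flag as the tractable case. The focal submanifolds of the $g=4$ isoparametric family have highly non-flat normal bundles (their shape operators $A_\alpha$ in Proposition \ref{Extrinsic gometry} do not commute), so the passage from the $\sigma$-bound to a positive Ricci lower bound, and hence to Myers and any sphere theorem, collapses. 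Your closing paragraph sensibly hedges toward proving the conjecture only under extra hypotheses, but the honest conclusion is stronger: the unrestricted conjecture is false, and these focal submanifolds are the witnesses.
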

For minimal submanifolds in unit spheres with flat normal connections,
Conjecture \ref{strong} was proved by T. Hasanis and T. Vlachos \cite{HV01}.
In fact, they showed that the condition $\mathrm{Ric}(W)$$ >\frac{n(n-3)}{n-1}$ is equivalent
to the inequality $\sigma(W)< \frac{n+1}{n-1}$. Thus in
the case that the normal connection is flat, Conjecture \ref{strong} follows
from Theorem B in \cite{HV01}.

Recall that the examples with even dimensions and $\sigma(W)= 1/3$ given
in \cite{Ga86} originated from the Veronese embeddings of the projective
planes $\mathbb{R}P^2$, $\mathbb{C}P^2$, $\mathbb{H}P^2$ and $\mathbb{O}P^2$ in $S^4(1)$, $S^7(1)$, $S^{13}(1)$ and
$S^{25}(1)$, respectively. Observe that those Veronese submanifolds are
just the focal submanifolds of isoparametric hypersurfaces
in unit spheres with $g=3$. Hence, it is quite natural for us to consider
the case with $g=4$.
\begin{theorem}\label{counter example}(\cite{QT16})
Let $M^n$ be an isoparametric hypersurface in $S^{n+1}(1)$ with $g=4$ and
multiplicities $(m_1, m_2)$,  and denote by $M_+$ and $M_-$ the focal
submanifolds of $M^n$ in $S^{n+1}(1)$ with dimension $m_1+2m_2$ and $2m_1+m_2$
respectively. Then $M_{\pm}$ are minimal in $S^{n+1}(1)$ with
$\sigma(M_{\pm})=1$. However, $M_{\pm}$ are not homeomorphic to the spheres.
\end{theorem}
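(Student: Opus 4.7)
The statement has three parts — minimality, the sharp value $\sigma(M_\pm)=1$, and the topological obstruction to being a sphere. The unifying tool for the geometric parts is a description of the shape operators of $M_\pm$ going back to M\"unzner: for every point $p\in M_\pm$ and every unit normal $\eta\in N_pM_\pm\subset T_pS^{n+1}(1)$, the shape operator $A_\eta$ has exactly three eigenvalues $1,-1,0$, with multiplicities $m_\mp,m_\mp,m_\pm$ on $M_\pm$ respectively. This follows from the principal-curvature formula $\cot\!\big(\theta_1+\tfrac{\alpha-1}{4}\pi\big)$ for the family $M^n$ by letting the distance parameter degenerate at the focal points; alternatively, in the OT-FKM case one can verify it by an explicit computation extending $\eta$ to the field $\eta(x)=P_\alpha x$ and using the Clifford relations $P_\alpha P_\beta+P_\beta P_\alpha=2\delta_{\alpha\beta}I$ to compute $A_\eta X=-(P_\alpha X)^{\top}$.

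Granting this description, minimality is immediate: $\operatorname{tr}A_\eta=m_\mp\cdot 1+m_\mp\cdot(-1)+m_\pm\cdot 0=0$ for every unit normal $\eta$, so the mean curvature vector vanishes identically on $M_\pm$. For the sharp constant $\sigma(M_\pm)=1$, I would argue separately for the two bounds. The lower bound $\sigma(M_\pm)\geq 1$ is produced by choosing any unit normal $\eta$ and any unit $+1$-eigenvector $X$ of $A_\eta$ (which exists since $m_\mp\geq 1$); then $\langle B(X,X),\eta\rangle=\langle A_\eta X,X\rangle=1$, giving $|B(X,X)|\geq 1$. For the upper bound $\sigma(M_\pm)\leq 1$, given a unit tangent $X$ with $B(X,X)\neq 0$, set $\eta^{*}:=B(X,X)/|B(X,X)|$; then
\[
|B(X,X)|=\langle B(X,X),\eta^{*}\rangle=\langle A_{\eta^{*}}X,X\rangle\leq \|A_{\eta^{*}}\|=1,
\]
using that the operator norm of $A_{\eta^{*}}$ equals its largest eigenvalue in absolute value, namely $1$. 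Combining, $\sigma(M_\pm)=1$ exactly.

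For the topological conclusion I would invoke M\"unzner's cohomology computation for focal submanifolds of isoparametric hypersurfaces in spheres. For $g=4$ he showed that $H^{*}(M_\pm;\mathbb{Z}_2)$ is a finitely generated ring with generators in degrees $m_\mp$ and $m_1+m_2$ (and analogously for $M_-$), so that the Poincar\'e polynomial carries nonzero terms in strictly intermediate dimensions. Since $m_1,m_2>0$, both $M_+$ and $M_-$ therefore have nontrivial cohomology in a degree $k$ with $0<k<\dim M_\pm$, and so cannot be homotopy equivalent (in particular not homeomorphic) to a sphere. A quick alternative, bypassing the full cohomology ring, is to exhibit the focal projection $\pi\colon M^n\to M_\pm$ as an $S^{m_\mp}$-bundle and use the Gysin sequence, together with M\"unzner's count of the Betti numbers of $M^n$, to rule out sphere cohomology for the base.

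I expect the technically subtle step to be the shape-operator description in the first paragraph: it is crucial that the eigenvalues $\{1,-1,0\}$ and their multiplicities are the \emph{same} for every unit normal $\eta$, a uniformity that is essentially the defining feature of focal submanifolds of isoparametric foliations. I would either cite this from \cite{Mu80} (for the general case, via the Jacobi-field analysis along normal geodesics joining $M_+$ to $M_-$) or, for the bulk of cases covered by Chi's classification, verify it in the OT-FKM model, where the Clifford algebra relations produce the eigenvalue data mechanically. Once this structural input is in hand, the rest of the argument is essentially formal.
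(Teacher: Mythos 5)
The paper itself states this theorem without proof, simply citing \cite{QT16}, so there is no in-text argument to compare against; judged on its own, your proof is correct and is essentially the argument of the cited reference. The structural input you flag --- that for a focal submanifold with $g=4$ every shape operator $A_{\eta}$ has spectrum $\{1,0,-1\}$ with multiplicities $m_{\mp},m_{\mp},m_{\pm}$, uniformly in the unit normal $\eta$ --- is indeed M\"unzner's theorem, and your deductions of minimality (vanishing trace), of $\sigma(M_{\pm})=1$ (the $+1$-eigenvector for the lower bound, the normalized $B(X,X)$ and the operator-norm bound for the upper bound), and of the topological conclusion via M\"unzner's computation of $H^{*}(M_{\pm};\mathbb{Z}_2)$ (nontrivial classes in degrees $m_{\mp}$ and $m_1+m_2$, both strictly intermediate since $m_1,m_2>0$) are all sound and standard.
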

\begin{rem}
If $m_1$ is odd, $M_+\subset S^{n+1}(1)$ in Theorem \ref{counter example} is
a counterexample to Conjecture \ref{weak} and Conjecture \ref{strong}. Similarly,
if $m_2$ is odd, $M_- \subset S^{n+1}(1)$ is also a counterexample to both
of the conjectures.
\end{rem}

\section{Pinkall-Thorbergsson Construction}
In this section, we will recall the construction in \cite{PT89} and find some interesting geometric properties.

Let $\{E_1, E_2,..., E_{m-1}\}$ be a set of orthogonal matrices on $\mathbb{R}^l$ with the Euclidean metric, which satisfy
$E_{\alpha}E_{\beta}+E_{\beta}E_{\alpha}=-2\delta_{\alpha\beta}\mathrm{Id}$ for $1 \leq \alpha,\beta\leq m-1$.
Define $$P_0:=\left(
\begin{array}{cc}
Id & 0 \\
0 & -Id \\
\end{array}\right),\;
P_1:=\left(
\begin{array}{cc}
0 & Id\\
Id & 0 \\
\end{array}\right),\;
 P_{\alpha}:=\left(
\begin{array}{cc}
0 & E_{\alpha-1}\\
-E_{\alpha-1} & 0 \\
\end{array}\right),$$
for $2 \leq \alpha\leq m.$
 Then $\{P_0, P_1,..., P_{m}\}$ is a set of orthogonal matrices on $\mathbb{R}^{2l}$ with the Euclidean metric, which satisfy
$P_{\alpha}P_{\beta}+P_{\beta}P_{\alpha}=2\delta_{\alpha\beta}\mathrm{Id}$, for $0 \leq \alpha,\beta\leq m$, i.e. $\{P_0, P_1,..., P_m\}$ is a symmetric
Clifford system on $\mathbb{R}^{2l}$. For any $0\leq\alpha\leq m$, $P_{\alpha}$ has eigenvalues $\pm 1$ of equal multiplicity $l$. Denote the eigenspaces of $\pm 1$ for $P_{\alpha}$ by $E_{\pm}(P_{\alpha})$.

According to \cite{PT89}, for $0<t\leq \frac{\pi}{4}$, one can define
$$M_+^{t}:=\{z=(x,y)\in \mathbb{R}^l\oplus\mathbb{R}^l=\mathbb{R}^{2l}:$$
$$|x|=\cos{t}, |y|=\sin{t}, \langle x, y\rangle=0, \langle x, E_{\alpha}y\rangle=0~\mathrm{for}~1\leq \alpha\leq m-1\}.$$
Clearly, $M_+^t$ is an embedded submanifold in $S^{2l-1}(1)\subset \mathbb{R}^{2l}$ of dimension $2l-m-2$. Write $a:=\tan{t}$\; and
$b:=\cot{t}$. Moreover, write $Q_0:=\left(
\begin{array}{cc}
aId & 0 \\
0 & -bId \\
\end{array}\right)$,
and $Q_{\alpha}:=P_{\alpha}$,
for $1 \leq \alpha\leq m$. Then
$$M_+^{t}=\{z=(x,y)\in \mathbb{R}^{2l}~|~|x|^2+|y|^2=1, \langle z, Q_{\alpha}z\rangle=0~\mathrm{for}~0\leq \alpha\leq m\}.$$
 For  $Q_{\alpha}, ~0\leq \alpha\leq m$, we have the following lemma, which will be useful later.
\begin{lemma}\label{Orthogonal identity}
For any $z\in M_+^t$ and $\alpha, \beta\in \{1, 2,..., m\}$, the following identities hold:
\begin{eqnarray}
\langle Q_0Q_0z, Q_0z\rangle&=&-2\cot{2t},\nonumber\\
\langle Q_{\alpha}Q_0z, Q_0z\rangle&=&0,\nonumber\\
\langle Q_0Q_{\alpha}z, Q_0z\rangle&=&0,\nonumber\\
\langle Q_0Q_0z, Q_{\alpha}z\rangle&=&0,\nonumber\\
\langle Q_{\alpha}Q_{\beta}z, Q_0z\rangle&=&0,\nonumber\\
\langle Q_{\alpha}Q_0z, Q_{\beta}z\rangle&=&0,\nonumber\\
\langle Q_0Q_{\alpha}z, Q_{\beta}z\rangle&=&-2\delta_{\alpha\beta}\cot{2t}.\nonumber
\end{eqnarray}
\end{lemma}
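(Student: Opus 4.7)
The plan is to reduce all seven identities to a short list of algebraic facts about the $Q_\alpha$'s combined with the defining equations of $M_+^t$. The starting point is the decomposition
\[ Q_0 = -\cot 2t \cdot \Id + \csc 2t \cdot P_0.\]
Together with the Clifford relations for $\{P_0,\dots,P_m\}$ and the identity $\tan t \cdot \cot t = 1$, this yields the quadratic relation $Q_0^2 + 2\cot 2t \cdot Q_0 = \Id$ and the anti-commutator $Q_0 Q_\alpha + Q_\alpha Q_0 = -2\cot 2t \cdot Q_\alpha$ for $\alpha \geq 1$, while $Q_\alpha Q_\beta + Q_\beta Q_\alpha = 2\delta_{\alpha\beta}\Id$ for $\alpha, \beta \geq 1$ is retained. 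Pairing each of these operator identities with $z$ under the constraints $|z|^2 = 1$ and $\langle Q_\gamma z, z\rangle = 0$ for $0\leq \gamma\leq m$ yields the preliminary orthogonalities
\[ |Q_0 z|^2 = 1, \qquad \langle Q_\alpha z, Q_0 z\rangle = 0 \ (\alpha \geq 1), \qquad \langle Q_\alpha z, Q_\beta z\rangle = \delta_{\alpha\beta} \ (\alpha,\beta\geq 1). \]

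With these in hand, the first four identities follow immediately. Substituting $Q_0^2 z = z - 2\cot 2t \cdot Q_0 z$ and using $|Q_0 z|^2 = 1$ gives the first identity; the same substitution together with symmetry of $Q_0$ and the orthogonalities yields the third and fourth; the second then follows from the third by applying the anti-commutator. For the fifth, applying the $Q_0$-$Q_\alpha$ anti-commutator twice produces the relation $Q_\beta Q_\alpha Q_0 = Q_0 Q_\beta Q_\alpha$ for $\alpha, \beta \geq 1$, which forces $\langle Q_\alpha Q_\beta z, Q_0 z\rangle$ to be symmetric in $(\alpha, \beta)$; but the Clifford relation $Q_\alpha Q_\beta + Q_\beta Q_\alpha = 2\delta_{\alpha\beta}\Id$ together with $\langle z, Q_0 z\rangle = 0$ also forces antisymmetry, so the quantity vanishes. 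The sixth identity reduces to the fifth by the same triple-product manipulation: $\langle Q_\alpha Q_0 z, Q_\beta z\rangle = \langle Q_\beta Q_\alpha Q_0 z, z\rangle = \langle Q_0 Q_\beta Q_\alpha z, z\rangle = \langle Q_\beta Q_\alpha z, Q_0 z\rangle = 0$.

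The main obstacle is the seventh, the only identity with a nontrivial value. Here I would again use the decomposition of $Q_0$ to reduce matters to computing $\langle P_0 P_\alpha z, P_\beta z\rangle$. In the diagonal case $\alpha = \beta$, the Clifford relation $P_0 P_\alpha = -P_\alpha P_0$ combined with $P_\alpha^2 = \Id$ reduces this to $-\langle P_0 z, z\rangle = -\cos 2t$, where the value $\langle P_0 z, z\rangle = \cos 2t$ is extracted from the constraint $\langle Q_0 z, z\rangle = 0$ via the decomposition of $Q_0$. In the off-diagonal case $\alpha \neq \beta$, combining symmetry of $P_0$ with the Clifford relation $P_\alpha P_0 = -P_0 P_\alpha$ produces $\langle P_0 P_\alpha z, P_\beta z\rangle = -\langle P_\alpha P_\beta z, P_0 z\rangle$; expanding $P_0 = \sin 2t \cdot Q_0 + \cos 2t \cdot \Id$ then splits the right-hand side into $\sin 2t \langle P_\alpha P_\beta z, Q_0 z\rangle + \cos 2t \langle P_\alpha P_\beta z, z\rangle$, both of which vanish --- the first by the fifth identity, and the second because $P_\alpha P_\beta$ is skew-symmetric for $\alpha \neq \beta$. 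Assembling the coefficients gives $-2\cot 2t \cdot \delta_{\alpha\beta}$, as claimed. The chief difficulty throughout is organizational rather than computational: the seven identities are tightly coupled, so the derivation must be carefully ordered to avoid circularity, in particular by establishing the fifth identity before the off-diagonal case of the seventh.
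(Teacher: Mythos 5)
Your proof is correct; note that the paper states Lemma \ref{Orthogonal identity} without proof, so there is no argument of the authors' to compare against. The decomposition $Q_0=-\cot 2t\cdot\mathrm{Id}+\csc 2t\cdot P_0$, the resulting quadratic relation $Q_0^2+2\cot 2t\cdot Q_0=\mathrm{Id}$ and anti-commutators, and the ordering you impose (preliminary orthogonalities, then identities one through four, then five, then six and the off-diagonal case of seven) all check out, including the key value $\langle P_0z,z\rangle=\cos 2t$ in the diagonal case of the seventh identity.
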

\subsection*{3.1 Extrinsic geometry}
In this subsection, we investigate the extrinsic geometric properties of $M_+^t$ in $S^{2l-1}(1)$ .
\begin{proposition}\label{Extrinsic gometry}
For any given $z\in M_+^t$, one has the following statements:

(1) The normal space $N_zM_+^t$ of $M_+^t$ in the unit sphere $S^{2l-1}(1)\subset \mathbb{R}^{2l}$ at $z$ is given by
$$N_zM_+^t=\mathrm{Span}\{Q_0z,Q_1z,\cdots,Q_mz\}.$$
 The tangent space $T_zM_+^t$ of $M_+^t$ at $z$ is given by
 $$T_zM_+^t=\{(u, v)\in \mathbb{R}^l\oplus\mathbb{R}^l=\mathbb{R}^{2l}~|~\langle(u, v), z\rangle=0, \langle (u, v), Q_iz\rangle=0~\mathrm{for}~0\leq i\leq m\}.$$
  Morover, $\mathbb{R}^{2l}=T_zM_+^t\oplus N_zM_+^t\oplus \mathrm{Span}\{z\}$.

(2) For $1\leq \alpha\leq m$, the shape operator $A_{\alpha}$ of $M_+^t$ with respect to $Q_{\alpha}z$ has principal curvatures $1$, $0$, $-1$ of multiplicities
$l-m-1$, $m$, and $l-m-1$, respectively. Moreover,
$$T_zM_+^t=E_+(Q_{\alpha}z)\oplus E_0(Q_{\alpha}z)\oplus E_-(Q_{\alpha}z),$$
where $$E_+(Q_{\alpha}z)=E_-(Q_{\alpha})\cap T_zM_+^t,$$  $$E_0(Q_{\alpha}z)=\mathrm{Span}\{Q_{\alpha}Q_{\beta}z~|~0\leq\beta\leq m,~\beta\neq\alpha\},$$ and $$E_-(Q_{\alpha}z)=E_+(Q_{\alpha})\cap T_zM_+^t$$ are principal distributions of $1$, $0$, and $-1$, respectively.

(3) Let $E_+(Q_{0})$ and $E_-(Q_{0})$ be eigenspaces of $Q_{0}$ with eigenvalues $a$ and $-b$, respectively. For the unit normal vector $Q_0z$, the shape operator $A_0$ has principal curvatures $\cot{t}$, $0$, $-\tan{t}$ of multiplicities
$l-m-1$, $m$, and $l-m-1$, respectively. Moreover,
$$ T_zM_+^t=E_+(Q_{0}z)\oplus E_0(Q_{0}z)\oplus E_-(Q_{0}z),$$
where
$$E_+(Q_{0}z)=E_-(Q_{0})\cap T_zM_+^t,$$
$$E_0(Q_{0}z)=\mathrm{Span}\{Q_{0}^{-1}Q_{\alpha}z~|~1\leq\alpha\leq m\},$$ and $$E_-(Q_{0}z)=E_+(Q_{0})\cap T_zM_+^t$$
are principal distributions of $\cot{t}$, $0$, and $-\tan{t}$, respectively.
\end{proposition}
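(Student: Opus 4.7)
The plan is to prove the three parts in order, with each resting on the identities recorded in Lemma~\ref{Orthogonal identity}.

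Part~(1) comes from realizing $M_+^t$ as the regular intersection in $\mathbb{R}^{2l}$ cut out by the $m+2$ quadratic equations $|z|^2=1$ and $\langle z,Q_\alpha z\rangle=0$, $0\le\alpha\le m$. Since each $Q_\alpha$ is symmetric, their Euclidean gradients are $2z$ and $2Q_\alpha z$. A short direct computation gives $|Q_0z|^2=a^2|x|^2+b^2|y|^2=1$ and $|Q_\alpha z|=1$ for $\alpha\ge 1$ (using $Q_\alpha^2=I$), while orthogonality of $\{z,Q_0z,\ldots,Q_mz\}$ follows from the defining equations of $M_+^t$, the antisymmetry of $Q_\alpha Q_\beta$ for $\alpha\ne\beta$ both $\ge 1$, and one short check for the $Q_0z$ pairings. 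This orthonormal frame of $m+2$ vectors simultaneously confirms $\dim T_zM_+^t=2l-m-2$ and identifies $N_zM_+^t$, $T_zM_+^t$, and the orthogonal splitting of $\mathbb{R}^{2l}$.

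For parts~(2) and~(3), the uniform starting point is the identity $\tilde\nabla_X(Qz)=QX$ for any symmetric matrix $Q$, which gives
\[
A_{Qz}(X)=-\mathrm{proj}_{T_zM_+^t}(QX)\qquad\text{for }X\in T_zM_+^t.
\]
Hence every $X\in E_\lambda(Q)\cap T_zM_+^t$ is an eigenvector of $A_{Qz}$ with eigenvalue $-\lambda$, which for $Q=Q_\alpha$ ($\alpha\ge1$) produces the principal curvatures $\pm1$ from $E_\mp(Q_\alpha)\cap T_zM_+^t$, and for $Q=Q_0$ (eigenvalues $a,-b$) produces $-\tan t,\cot t$ from $E_\pm(Q_0)\cap T_zM_+^t$. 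The kernel of $A_{Qz}$ consists of those $Y$ with $QY\in V:=N_zM_+^t\oplus\mathrm{Span}\{z\}$, i.e.\ $Y\in Q^{-1}(V)$. Imposing $Y\in T_zM_+^t$ (so $\langle Y,z\rangle=\langle Y,Q_\gamma z\rangle=0$) and using the identities of Lemma~\ref{Orthogonal identity}, together with one analogous identity $\langle Q_0^{-1}Q_\alpha z,Q_\gamma z\rangle=0$ for the $Q_0z$ case that is checked by a direct coordinate computation, trims $Q^{-1}(V)\cap T_zM_+^t$ down to the asserted generators $\{Q_\alpha Q_\beta z:\beta\ne\alpha\}$ for $\alpha\ge 1$ and $\{Q_0^{-1}Q_\alpha z:1\le\alpha\le m\}$ for $Q=Q_0$.

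The main obstacle will be the dimension count: both $\dim E_+$ and $\dim E_-$ must come out to $l-m-1$. Linear independence of the $m$ kernel generators is immediate—applying $Q_\alpha$ (respectively $Q_0$) returns a subset of the orthonormal frame from part~(1)—so $\dim\ker\le m$, and the issue is to rule out strict inequality. The plan is a trace argument. On the orthonormal basis $\{z,Q_0z,\ldots,Q_mz\}$ of $V$, Lemma~\ref{Orthogonal identity} lets one evaluate $\mathrm{tr}(\mathrm{proj}_V\circ Q|_V)$ entry by entry, and subtracting from the known $\mathrm{tr}(Q)$ on $\mathbb{R}^{2l}$ yields $\mathrm{tr}(A_{Qz})=-\mathrm{tr}(\mathrm{proj}_{T_zM_+^t}\circ Q|_{T_zM_+^t})$. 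For $\alpha\ge 1$ every diagonal entry on $V$ vanishes (identity~(2), the defining equation $\langle Q_\alpha z,z\rangle=0$, and Clifford anticommutation $Q_\alpha Q_\beta+Q_\beta Q_\alpha=0$ for $\alpha\ne\beta\ge 1$), so $\mathrm{tr}(A_{Q_\alpha z})=0$ forces $\dim E_+=\dim E_-$ and pins all three dimensions. For $\alpha=0$, identities~(1) and~(7) contribute $(m+1)(a-b)$ on $V$; subtracting from $\mathrm{tr}(Q_0)=l(a-b)$ gives $\mathrm{tr}(A_{Q_0z})=(l-m-1)(b-a)$, which is exactly the trace forced by the claimed multiplicities $l-m-1$ of $\cot t$ and $-\tan t$. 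The subtlety in this last step is that $Q_0$ does \emph{not} satisfy $Q_0Q_\alpha+Q_\alpha Q_0=0$ for general $t$—that anticommutation holds only at $t=\pi/4$—so one cannot short-circuit the calculation via a Clifford relation, and the Lemma, especially identity~(7), is indispensable.
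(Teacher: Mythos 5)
Your setup for parts (2)--(3) matches the paper's: $A_{Qz}X=-\mathrm{proj}_{T_zM_+^t}(QX)$, the eigenspaces of $Q$ intersected with $T_zM_+^t$ give eigenvectors of $A_{Qz}$, and Lemma~\ref{Orthogonal identity} shows the asserted kernel generators lie in $T_zM_+^t$. Your trace computations are also correct as far as they go. But there is a genuine gap at exactly the point you flag as ``the main obstacle.'' The step ``$\mathrm{tr}(A_{Q_\alpha z})=0$ forces $\dim E_+=\dim E_-$ and pins all three dimensions'' presupposes that the spectrum of $A_{Q_\alpha z}$ is contained in $\{1,0,-1\}$ (resp.\ $\{\cot t,0,-\tan t\}$ for $A_0$), i.e.\ that the three subspaces you exhibit already exhaust $T_zM_+^t$ --- which is precisely what remains to be proved. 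A priori the orthogonal complement $W$ of $\bigl(E_-(Q)\cap T_zM_+^t\bigr)\oplus\ker A\oplus\bigl(E_+(Q)\cap T_zM_+^t\bigr)$ inside $T_zM_+^t$ could be nonzero (the naive count $\dim\bigl(E_\pm(Q)\cap T_zM_+^t\bigr)\geq l-(m+2)$ leaves a two-dimensional ambiguity), it is $A$-invariant, and a zero trace is consistent with $A|_W$ having eigenvalues such as $\pm\mu$ for any $\mu$. So the trace identity cannot rule out extra principal curvatures, and the multiplicities are not pinned down.

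The missing idea, which is the paper's key step, is a refined codimension count: for $X$ in the eigenspace $E_\mp(Q_\alpha)$ (an $l$-dimensional subspace), the condition $\langle X,Q_\alpha z\rangle=0$ is automatic (it equals $\mp\langle X,z\rangle$), so $E_\mp(Q_\alpha)\cap T_zM_+^t$ is cut out of $E_\mp(Q_\alpha)$ by only $m+1$ linear conditions and hence has dimension at least $l-m-1$; similarly for $E_\pm(Q_0)$ in part (3), where the paper even identifies $E_-(Q_0)\cap T_zM_+^t$ explicitly as vectors of the form $(0,v)$. Then $(l-m-1)+m+(l-m-1)=2l-m-2=\dim T_zM_+^t$ forces all inequalities to be equalities, the three distributions exhaust the tangent space, and the multiplicities follow --- at which point the trace argument is superfluous. (Also, a small slip: linear independence of the $m$ kernel generators gives $\dim\ker A\geq m$, not $\leq m$; the upper bound comes from your trimming of $Q^{-1}(V)\cap T_zM_+^t$, which is fine.) You should replace the trace argument by this eigenspace dimension estimate, or else supply a separate proof that $A_{Qz}$ has no eigenvalues outside the claimed set.
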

\begin{proof}
(1) It follows directly from the definition of $M_+^t$.

(2) Observe that for any tangent vector $X\in T_zM_+^t$, $A_{\alpha}X=-(P_{\alpha}X)^{\mathrm{T}}$, where $(P_{\alpha}X)^{\mathrm{T}}$ is the tangent projection
of $P_{\alpha}X$. Define
$$D_+=E_-(Q_{\alpha})\cap T_zM_+^t,$$ $$D_0=\mathrm{Span}\{Q_{\alpha}Q_{\beta}z~|~0\leq\beta\leq m,~\beta\neq\alpha\},$$
and $$D_-=E_+(Q_{\alpha})\cap T_zM_+^t.$$
 Clearly, $D_{\pm}$ are subspaces of $T_zM_+^t$. Moreover, it follows from Lemma \ref{Orthogonal identity} that $D_0$ is also a subspace of $T_zM_+^t$. Then for any $X\in D_+$, we have $A_{\alpha}X=X$; For any $X\in D_0, A_{\alpha}X=0$; For any $X\in D_-, A_{\alpha}X=-X$. Since
 $$D_+=E_-(Q_{\alpha})\cap T_zM_+^t$$
 $$ =\{(u, v)\in \mathbb{R}^{2l}~|~P_{\alpha}(u, v)=-(u, v), \langle(u, v), z\rangle=0, \langle(u, v), Q_{\beta}z\rangle=0~\mathrm{for}~0\leq\beta\leq m,~\beta\neq\alpha\},$$
we have $\mathrm{dim} D_+\geq l-m-1$. Similarly, $\mathrm{dim}D_-\geq l-m-1$. Thus $D_+, D_0$ and $D_-$ are mutually orthogonal subspaces of $T_zM_+^t$ and $$\mathrm{dim} D_++\mathrm{dim} D_0+\mathrm{dim} D_-\geq l-m-1+m+l-m-1=2l-m-2.$$ Then (2) follows easily.

(3) As in the proof of (2), for any tangent vector $X\in T_zM_+^t$, $A_{0}X=-(Q_{0}X)^{\mathrm{T}}$. Define $$D_+=E_-(Q_{0})\cap T_zM_+^t,$$
 $$D_0=\mathrm{Span}\{Q_{0}^{-1}Q_{\alpha}z~|~1\leq\alpha\leq m\},$$
and $$D_-=E_+(Q_{0})\cap T_zM_+^t.$$  It is clear that $D_{\pm}$ are subspaces of $T_zM_+^t$. By a direct computation, we have $$\langle Q_{0}^{-1}Q_{\alpha}z, z\rangle=0, \langle Q_{0}^{-1}Q_{\alpha}z, Q_{\beta}z\rangle=0,~\mathrm{for}~1\leq\alpha\leq m, 0\leq\beta\leq m.$$
It follows that $D_0$ is also a subspace of $T_zM_+^t$. Moreover,
$$\langle Q_0^{-1}Q_{\alpha}z, Q_0^{-1}Q_{\beta}z\rangle=\delta_{\alpha\beta}$$
 for $1\leq\alpha, \beta\leq m$.
Then for any $X\in D_+$, we have $A_{0}X=bX$; For any $X\in D_0, A_{0}X=0$; For any $X\in D_-, A_{0}X=-aX$.
Meanwhile, it is easy to verify that $D_+, D_0, D_-$ are mutually orthogonal subspaces of $T_zM_+^t$.
Since $$D_+=E_-(Q_{0})\cap T_zM_+^t=\{(0, v)\in \mathbb{R}^{2l}~|~\langle(0, v), z\rangle=0, \langle(0, v), P_{\alpha}z\rangle=0~\mathrm{for}~1\leq \alpha\leq m\}.$$  we have $\mathrm{dim} D_+\geq l-m-1$. Similarly, $\mathrm{dim}D_-\geq l-m-1$. Thus $D_+, D_0$ and $D_-$ are mutually orthogonal subspaces of $T_zM_+^t$ and $$\mathrm{dim} D_++\mathrm{dim} D_0+\mathrm{dim} D_-\geq l-m-1+m+l-m-1=2l-m-2.$$ With these arguments, we can prove (3) easily.
\end{proof}
\subsection*{3.2 Scalar curvature}
Let $M^n$ be a submanifold in $S^{n+p}(1)$, and $B$ the second fundamental form as before. Around each point $z\in M^n$, we can choose an adapted moving frame $e_i$'s and $e_{\alpha}$'s. Restricted on $M$,
$\{e_i~|~1\leq i\leq n\}$ is a local orthonormal basis of $TM$, and $\{e_{\alpha}~|~n+1\leq \alpha\leq n+p\}$ is a local orthonormal basis of $NM$. Denote $$A_{\alpha}e_i=\sum_{j=1}^{n}h_{ij}^{\alpha}e_j,$$ $$H^{\alpha}=\mathrm{Tr}(A_{\alpha})=\sum_{i=1}^{n}h_{ii}^{\alpha},$$ $$H=\sum_{\alpha}H^{\alpha}e_{\alpha},\;|H|^2=\sum_{\alpha}(H^{\alpha})^2,\;|B|^2=\sum_{i, j, \alpha}(h_{ij}^{\alpha})^2.$$
The following lemma follows from the Gauss lemma.
\begin{lemma}
The scalar curvature $S$ of $M^n$ with the induced metric in $S^{n+p}(1)$ is given by
$$S=n(n-1)+|H|^2-|B|^2.$$
\end{lemma}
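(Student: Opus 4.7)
The plan is to deduce the formula as a direct consequence of the Gauss equation for submanifolds in space forms of constant curvature $1$. Since the ambient space is $S^{n+p}(1)$, its Riemannian curvature tensor is
\begin{equation*}
\widetilde{R}(X,Y,Z,W) = \langle X,W\rangle\langle Y,Z\rangle - \langle X,Z\rangle\langle Y,W\rangle,
\end{equation*}
so the Gauss equation reads
\begin{equation*}
R(X,Y,Z,W) = \langle X,W\rangle\langle Y,Z\rangle - \langle X,Z\rangle\langle Y,W\rangle + \langle B(X,W), B(Y,Z)\rangle - \langle B(X,Z), B(Y,W)\rangle.
\end{equation*}

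The next step is to trace this identity twice with respect to the local orthonormal frame $\{e_i\}_{i=1}^{n}$ of $TM$ introduced above. Contracting once in the pair $(Y,Z)$ yields the Ricci tensor, and contracting once more in $(X,W)$ yields the scalar curvature
\begin{equation*}
S = \sum_{i,j=1}^{n} R(e_i,e_j,e_j,e_i).
\end{equation*}
The ambient piece gives $\sum_{i,j}(\delta_{ii}\delta_{jj} - \delta_{ij}^2) = n^2 - n = n(n-1)$.

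For the extrinsic part, I would expand $B(e_i,e_j) = \sum_{\alpha} h_{ij}^{\alpha} e_{\alpha}$ to obtain
\begin{equation*}
\sum_{i,j}\bigl(\langle B(e_i,e_i), B(e_j,e_j)\rangle - |B(e_i,e_j)|^2\bigr) = \sum_{\alpha}\Bigl(\sum_i h_{ii}^{\alpha}\Bigr)^2 - \sum_{\alpha, i, j}(h_{ij}^{\alpha})^2,
\end{equation*}
which, under the notation fixed before the lemma, is exactly $|H|^2 - |B|^2$. Combining the two pieces gives the desired identity $S = n(n-1) + |H|^2 - |B|^2$.

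There is no genuine obstacle here; the only point requiring a little care is matching the sign convention for the Riemann tensor with the one used in the Gauss equation, and ensuring that the second fundamental form and its trace $H$ are taken with the same sign convention as in the preceding subsection, so that $|H|^2 = \sum_{\alpha}(H^{\alpha})^2$ with $H^{\alpha} = \sum_i h_{ii}^{\alpha}$ appears unambiguously.
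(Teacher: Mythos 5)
Your proof is correct and is exactly the argument the paper has in mind: the text simply states that the lemma ``follows from the Gauss lemma'' (i.e.\ the Gauss equation), and your double trace of the Gauss equation in constant curvature $1$, expanding $B(e_i,e_j)=\sum_\alpha h_{ij}^\alpha e_\alpha$ to identify $|H|^2-|B|^2$, is the standard computation being invoked. No gaps, and no divergence from the paper's approach.
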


\begin{proposition}\label{Scalar curvature}
The scalar curvature $S^t$ of $M_+^t$ with the induced metric in $S^{2l-1}(1)$ is
$S^t=(2l-m-2)(2l-m-3)-2(l-m-1)(l-1)+(l-m-1)(l-m-2)(\tan^2{t}+\cot^2{t})$. In particular,
the scalar curvature $S^t$ of $M_+^t$ is a positive constant for $0<t\leq \frac{\pi}{4}$, and $S^t\geq S^{\frac{\pi}{4}}$.
\end{proposition}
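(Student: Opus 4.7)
The plan is to apply the Gauss-type identity $S=n(n-1)+|H|^2-|B|^2$ from the preceding lemma, with $n=\dim M_+^t=2l-m-2$ and codimension $p=m+1$. By Proposition \ref{Extrinsic gometry}(1), the normal space at any $z\in M_+^t$ is spanned by $\{Q_0z,Q_1z,\dots,Q_mz\}$; my first step is to check that these form a global orthonormal normal frame. For $1\leq\alpha\leq m$ this is immediate because $Q_\alpha=P_\alpha$ is symmetric with $P_\alpha^2=\mathrm{Id}$, while $|Q_0z|^2=\tan^2 t\cdot\cos^2 t+\cot^2 t\cdot\sin^2 t=1$. Mutual orthogonality follows from the defining equations of $M_+^t$ together with the Clifford anticommutation relations (a short computation, already implicit in Lemma \ref{Orthogonal identity}).

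Next I read off the eigenvalue data of each shape operator directly from Proposition \ref{Extrinsic gometry}. For $1\leq\alpha\leq m$ the operator $A_\alpha$ has eigenvalues $1,0,-1$ with multiplicities $l-m-1,m,l-m-1$, hence
\[
\Tr{A_\alpha}=0,\qquad \Tr{A_\alpha^2}=2(l-m-1).
\]
For $\alpha=0$ the eigenvalues are $\cot t,0,-\tan t$ with the same multiplicities, so
\[
\Tr{A_0}=(l-m-1)(\cot t-\tan t),\qquad \Tr{A_0^2}=(l-m-1)(\tan^2 t+\cot^2 t).
\]
Summing over the orthonormal normal frame yields
\[
|H|^2=(l-m-1)^2(\tan^2 t+\cot^2 t-2),\qquad |B|^2=(l-m-1)(\tan^2 t+\cot^2 t)+2m(l-m-1).
\]

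Substituting into $S^t=(2l-m-2)(2l-m-3)+|H|^2-|B|^2$, the two occurrences of $(\tan^2 t+\cot^2 t)$ combine with coefficient $(l-m-1)(l-m-1)-(l-m-1)=(l-m-1)(l-m-2)$, and the constant terms collapse to $-2(l-m-1)(l-1)$ after grouping $-2(l-m-1)^2-2m(l-m-1)=-2(l-m-1)(l-m-1+m)$. This gives the stated formula for $S^t$.

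For the final assertion, the coefficient $(l-m-1)(l-m-2)$ is nonnegative and $\tan^2 t+\cot^2 t\geq 2$ on $(0,\pi/4]$ with equality iff $t=\pi/4$, so $S^t$ is a decreasing function of $\tan^2 t+\cot^2 t$ only through that term; hence $S^t\geq S^{\pi/4}$. Constancy on each level $t$ is already built into the formula. To verify positivity it suffices to check $S^{\pi/4}>0$: setting $a=l-m-1,\,b=l-1$ and substituting $\tan^2 t+\cot^2 t=2$ simplifies the expression to $3a(a-1)+b(b-1)=3(l-m-1)(l-m-2)+(l-1)(l-2)$, which is manifestly positive in the non-degenerate range. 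The entire argument is algorithmic once Proposition \ref{Extrinsic gometry} is in hand; the only mild obstacle is the bookkeeping in collecting the coefficients of $\tan^2 t+\cot^2 t$ and the constant term correctly.
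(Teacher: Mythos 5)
Your argument is correct and follows essentially the same route as the paper: both invoke the Gauss identity $S = n(n-1) + |H|^2 - |B|^2$ and read off $|H|^2$ and $|B|^2$ from the eigenvalue data of the shape operators in Proposition~\ref{Extrinsic gometry}, with the paper's constant $S^{\pi/4}=4(l-m-1)(l-m-2)+2m(l-m-2)+m(m+1)$ agreeing with your $3(l-m-1)(l-m-2)+(l-1)(l-2)$ after expansion. One minor wording slip: since the coefficient $(l-m-1)(l-m-2)$ is nonnegative, $S^t$ is a \emph{non-decreasing} function of $\tan^2 t+\cot^2 t$, not a decreasing one, though your conclusion $S^t\geq S^{\pi/4}$ is of course correct.
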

\begin{proof}
According to Proposition \ref{Extrinsic gometry}, $H^{0}=2(l-m-1)\cot{2t}$, $H^{\alpha}=0$ for $1\leq\alpha\leq m$, and $$|B|^2=2m(l-m-1)+(l-m-1)(\tan^2{t}+\cot^2{t}).$$  For $t=\frac{\pi}{4}$, the scalar curvature of $M_+^{\frac{\pi}{4}}$ with the induced metric in $S^{2l-1}(1)$ is given by
$$ S^{\frac{\pi}{4}}=4(l-m-1)(l-m-2)+2m(l-m-2)+m(m+1)>0.$$
Then the result follows from the lemma above.
\end{proof}

\begin{rem}

(1). For any $0<t<\frac{\pi}{4}$, both $|H|^2$ and $|B|^2$ of $M_+^t\subset S^{2l-1}(1)$ are positive constants.

(2). If $l-m-1=1$, then $S^t=S^{\frac{\pi}{4}}$ for any $0<t\leq\frac{\pi}{4}$.
\end{rem}

\subsection*{3.3 Mean curvature flow}
By Proposition \ref{Extrinsic gometry}, for $0<t< \frac{\pi}{4}$, $M_+^t$ is not a minimal submanifold in $S^{2l-1}(1)$. Hence, it is interesting to consider the behavior of $M_+^t$ under mean curvature flow.
\begin{proposition}\label{abs}
We have a mean curvature flow as follows

(1). For the initial value $F(\cdot,0): M_+\rightarrow S^{2l-1}(1)$, $$ F(x, y; 0)=(\sqrt{2} \cos \beta(0)x, \sqrt{2}\sin\beta(0) y)$$
with $0<\beta(0)<\frac{\pi}{4}$, the mean curvature flow of $F(\cdot, 0)$ is given by
$$F: M_+\times (-\infty, T) \rightarrow S^{2l-1}(1), \;F(x, y; t)=(\sqrt{2} \cos \beta(t)x, \sqrt{2}\sin\beta(t) y),$$
where $\cos{2\beta(t)}=\cos{2\beta(0)}e^{4(l-m-1)t}$ and $1=\cos{2\beta(0)}e^{4(l-m-1)T}$.

(2). The mean curvature flow $F(M_+, t)$ has type I singularity at $T$. More precisely, there exists a constant $C>0$ such that $$\mathrm{sup}_{F(M_+,t)}|B|^2\leq \frac{C}{T-t}, \forall t\in [0, T).$$

(3). As $t\rightarrow T$, $F(M_+, t)$ converges to $S^{l-1}(1)=\{(x,0)\in \mathbb{R}^{2l}~|~|x|=1\}$.
\end{proposition}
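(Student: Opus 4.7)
The plan is to exploit the explicit ansatz $F(x,y;t)=(\sqrt{2}\cos\beta(t)\,x,\sqrt{2}\sin\beta(t)\,y)$ suggested by the statement, so that the full mean curvature flow system collapses to a single scalar ODE for $\beta(t)$. First I would check that for $(x,y)\in M_+=M_+^{\pi/4}$ (where $|x|=|y|=1/\sqrt{2}$), the map $F(\cdot,t)$ embeds $M_+$ onto $M_+^{\beta(t)}\subset S^{2l-1}(1)$: the scaling preserves $\langle x,y\rangle=0$ and $\langle x,E_\alpha y\rangle=0$, the two block norms become $\cos\beta$ and $\sin\beta$, and the total norm is $1$.

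Next I would extract the mean curvature vector of $M_+^\beta$ from Proposition \ref{Extrinsic gometry}. For $1\le\alpha\le m$ the shape operator $A_\alpha$ is traceless (principal curvatures $\pm 1$ of equal multiplicity, with a kernel of dimension $m$), whereas $\mathrm{tr}(A_0)=(l-m-1)(\cot\beta-\tan\beta)=2(l-m-1)\cot(2\beta)$. A direct check shows $|Q_0 z|=1$, so the mean curvature vector of $M_+^\beta$ in $S^{2l-1}(1)$ is $\vec{H}=2(l-m-1)\cot(2\beta)\,Q_0 z$. Differentiating the ansatz gives $\partial_t F=-\dot\beta\,Q_0 F$, and therefore the MCF equation $\partial_t F=\vec{H}$ reduces to the scalar ODE
\[\dot\beta=-2(l-m-1)\cot(2\beta).\]
The substitution $u=\cos(2\beta)$ linearizes it to $\dot u=4(l-m-1)u$, which immediately produces $\cos(2\beta(t))=\cos(2\beta(0))\,e^{4(l-m-1)t}$ and fixes the singular time $T$ by $\cos(2\beta(0))\,e^{4(l-m-1)T}=1$.

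For part (2) I would reuse the formula $|B|^2=2m(l-m-1)+(l-m-1)(\tan^2\beta+\cot^2\beta)$ established in the proof of Proposition \ref{Scalar curvature}, together with the identity $\tan^2\beta+\cot^2\beta=\frac{4}{\sin^2(2\beta)}-2$. Since $\sin^2(2\beta(t))=1-\cos^2(2\beta(0))\,e^{8(l-m-1)t}=1-e^{-8(l-m-1)(T-t)}\sim 8(l-m-1)(T-t)$ as $t\to T$, the leading term of $|B|^2$ behaves like $\frac{1}{2(T-t)}$, which after absorbing bounded corrections yields the type I estimate $\sup|B|^2\le C/(T-t)$ on $[0,T)$. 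For part (3), as $\beta(t)\to 0$ the map $F(x,y;t)$ converges uniformly on $M_+$ to $(\sqrt{2}\,x,0)$, whose norm is $1$ since $|x|=1/\sqrt{2}$, so the image converges in Hausdorff distance to $\{(x',0)\in\mathbb{R}^{2l}:|x'|=1\}=S^{l-1}(1)$.

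The step I expect to be the main obstacle is pinning down the correct sign in the mean curvature computation: the conventions linking the shape operator, the second fundamental form, and the mean curvature vector can easily introduce an unwanted sign, and only one sign produces $\dot\beta<0$ on $(0,\pi/4)$, which is essential so that the flow drives $\beta(t)$ down to $0$, recovers the great sphere in (3) as the limit, and gives a maximal solution on $(-\infty,T)$ rather than on $(T,+\infty)$. Once that sign is locked in, the remainder is routine ODE solution and asymptotic analysis.
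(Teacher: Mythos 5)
Your proposal is correct and follows essentially the same route as the paper: the ansatz $F(x,y;t)=(\sqrt{2}\cos\beta(t)\,x,\sqrt{2}\sin\beta(t)\,y)$, the mean curvature vector $\vec H=2(l-m-1)\cot(2\beta)\,Q_0z$ read off from Proposition \ref{Extrinsic gometry}, the reduction to $\dot\beta=-2(l-m-1)\cot 2\beta$ and its linearization in $u=\cos 2\beta$, then the asymptotics of $|B|^2$ and the limit $(\sqrt{2}x,0)$. The only cosmetic differences are that you phrase $\partial_t F$ as $-\dot\beta\,Q_0F$ rather than writing out the two blocks, and you estimate $|B|^2$ via $\tan^2\beta+\cot^2\beta=4/\sin^2(2\beta)-2$ instead of the paper's $\frac{1\pm\cos 2\beta}{1\mp\cos 2\beta}$ manipulation; both give the same limit $\frac{1}{2}$ for $|B|^2(T-t)$.
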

\begin{proof}
(1). Consider the map
\begin{eqnarray}
F: M_+\times (-\infty,T)\longrightarrow S^{2l-1}(1)\nonumber
\end{eqnarray}
by $F(x,y;t)=(\sqrt{2} \cos \beta(t)x, \sqrt{2}\sin\beta(t) y)$, where the function $\beta(t)$
is given by $$\cos{2\beta(t)}=\cos{2\beta(0)}e^{4(l-m-1)t}, \;0< \beta(t)<\pi/4.$$ It is clear that the image of $F$
is included in $M_+^{\beta(t)}$. By a direct computation,
\begin{eqnarray}
\frac{\partial F}{\partial t}=-\sqrt{2}\dot{\beta(t)}(\sin\beta(t) x, -\cos \beta(t) y).\nonumber
\end{eqnarray}
On the other hand, by Proposition \ref{Extrinsic gometry}, the mean curvature vector of $M_+^{\beta(t)}$ at the point
$(\sqrt{2} \cos \beta(t)x, \sqrt{2}\sin\beta(t) y)$ is equal to
\begin{eqnarray}
H|_{F(x,y;t)}&=& (l-m-1)(\cot \beta(t)-\tan \beta(t)) Q_0 F(x,y;t) \nonumber\\
&=& 2\sqrt{2}(l-m-1)\cot 2\beta(t)(\sin\beta(t) x, -\cos \beta(t) y).\nonumber
\end{eqnarray}
At last, the definition $\beta(t)$ yields the equality
$$\dot{\beta(t)}=-2(l-m-1)\cot 2\beta(t),$$
and hence the equation of the mean curvature flow
$\frac{\partial F}{\partial t}=H|_{F(x,y;t)}.$

(2). According to Proposition \ref{Extrinsic gometry}, $$\mathrm{sup}_{F(M_+,t)}|B|^2=2m(l-m-1)+(l-m-1)(\tan^2{\beta(t)}+\cot^2{\beta(t)}).$$
By (1),
\begin{eqnarray}
\mathrm{sup}_{F(M_+,t)}|B|^2&=&2m(l-m-1)+(l-m-1)(\tan^2{\beta(t)}+\cot^2{\beta(t)}) \nonumber\\
&=&2m(l-m-1)+(l-m-1)(\frac{1-\cos{2\beta(t)}}{1+\cos{2\beta(t)}}+\frac{1+\cos{2\beta(t)}}{1-\cos{2\beta(t)}}). \nonumber
\end{eqnarray}
Thus
\begin{eqnarray}
\lim_{t \rightarrow T} \mathrm{sup}_{F(M_+, t)}|B|^2(T-t)&=&\lim_{t \rightarrow T}(l-m-1)\frac{e^{4(l-m-1)(T-t)}+1}{e^{4(l-m-1)(T-t)}-1}(T-t)\nonumber\\
&=&\frac{1}{2}.\nonumber
\end{eqnarray}
Then (2) follows easily.

(3). For any $(x, y)\in M_+$, $\lim_{t\rightarrow T} F(x, y; t)=(\sqrt{2}x, 0)$.
\end{proof}
\begin{rem}(1). Roughly speaking, the family $M_+^t$ in $S^{2l-1}(1)$ constitutes a mean curvature flow.

(2). From Proposition \ref{abs} (1), $F(M_+, t)$ converges to $M_+$ as $t\rightarrow -\infty$.
\end{rem}

\end{document}